\newtheorem{proposition}{Proposition}
\newtheorem{algorithm}{Algorithm}
\newtheorem{remark}{Remark}
\title{\LARGE \bf Relaxed Connected Dominating Set Problem \linebreak with Application to Secure Power Network Design}
\author{Kin Cheong Sou and Jie Lu
\thanks{K.C.~Sou is with the Department of Electrical Engineering, National Sun Yat-sen University, Taiwan. J.~Lu is with the School of Science and Technology, ShanghaiTech University, China. E-mail: {\tt sou12@mail.nsysu.edu.tw, lujie@shanghaitech.edu.cn}}}
\begin{document}

\maketitle
\thispagestyle{empty}
\pagestyle{empty}

\begin{abstract}
This paper investigates a combinatorial optimization problem motived from a secure power network design application in [D\'{a}n and Sandberg 2010]. Two equivalent graph optimization formulations are derived. One of the formulations is a relaxed version of the connected dominating set problem, motivating the term relaxed connected dominating set (RCDS) problem. The RCDS problem is shown to be NP-hard, even for planar graphs. A mixed integer linear programming formulation is presented. In addition, for planar graphs a fixed parameter polynomial time solution methodology based on sphere-cut decomposition and dynamic programming is presented. The computation cost of the sphere-cut decomposition based approach grows linearly with problem instance size, provided that the branchwidth of the underlying graph is fixed and small. A case study with IEEE benchmark power networks verifies that small branchwidth are not uncommon in practice. The case study also indicates that the proposed methods show promise in computation efficiency.
\end{abstract}

\section{Introduction}
Our society depends heavily on the proper operation of network systems including intelligent transport systems, electric power distribution and transmission systems etc. These systems are supervised and controlled through Supervisory Control And Data Acquisition (SCADA) systems. For instance, in the electric power transmission grid, SCADA systems collect measurements through remote terminal units (RTUs) and send them to the state estimator to estimate the system states. The estimated states are used for subsequent operations such as contingency analysis (for system health monitoring) and optimal power flow dispatch (for control). Any malfunctioning of these operations can lead to significant social and economical consequences such as the northeast US blackout of 2003.

Because of its importance, the SCADA measurement system has been the subject of extensive studies. Recently, an important measurement system related research topic which has attracted a lot of attention is cyber-physical security. One of the purposes of cyber-physical security studies is to analyze various types of data attacks and their consequences on the system (e.g.,\cite{LRN09,dan2010stealth,sandberg2010security,bobba2010detecting,kosut2010malicious,6504815_TSG,kim2011strategic}). Another important research direction, which is the focus of this paper, is security-guaranteeing system design. A typical design objective is to seek the minimum cost strategic placement of protection resources (e.g., encryption devices, secure phasor measurement units) so that, according to the chosen attack and defense model, no data attack in the system is possible (e.g., \cite{dan2010stealth, bobba2010detecting, kim2011strategic}). The security-guaranteeing system design problem is also closely related to the problem of observability-guaranteeing system design in power systems (e.g., \cite{aminifar2010contingency, chakrabarti2009placement}). Because of the combinatorial feature, it is often considered ``acceptable'' to obtain only suboptimal solutions of protection placement problems. For example, \cite{dan2010stealth, bobba2010detecting, kim2011strategic} consider various types of heuristic algorithms for protection placement, aiming to minimize the protection cost. Reference \cite{6504815_TSG} provides a suboptimal (in economic sense) strategy for some given possible attack scenarios. Instead, this paper reports efficient and {\bf \emph{exact}} solution methodologies, with optimality guarantee, to a nontrivial system protection placement problem first described in \cite{dan2010stealth}. The design problem, to be described in Section~\ref{sec:application_problem}, seeks a minimum cost strategy to encrypt the measurement communications in a power network, in order to prevent stealth data attack of the form in \cite{LRN09}. Reference \cite{dan2010stealth} points out that the design problem is related to a dominating set problem, and proposes a heuristic suboptimal solution algorithm based on the observation. While the analysis in \cite{dan2010stealth} is performed in a linear algebra setting involving matrix rank calculations, this paper investigates the problem from a graph perspective and provides two equivalent graph optimization formulations characterizing the problem. We prove that the design problem is NP-hard (even when restricted to planar graphs). In addition, we derive a mixed integer linear programming formulation of the problem that is easy to implement (with three sets of constraints) and reasonably efficient to solve (e.g., CPLEX solves an instance with 300 nodes in less than one second on a personal computer). To enable the design with very large-scale systems we develop a fixed parameter polynomial time design algorithm, which is a two-step procedure based on sphere-cut decomposition \cite{ROBERTSON1991153} and dynamic programming. This approach provides an exact solution to the design problem when the underlying graph is planar, and provides a (reasonably tight) upper bound in general. The main advantage of the proposed approach is computation efficiency in both theory and practice. The computation cost grows linearly with problem instance size (i.e., number of edges), provided that a graph structure parameter called branchwidth \cite{ROBERTSON1991153} is fixed and small. In practice, it is not uncommon (as indicated by the IEEE power network benchmarks) that the branchwidth of an application graph is small, because intuitively branchwidth is a measure of how closely a graph resembles a tree (branchwidth $\le 2$ for trees). The sphere-cut decomposition (resp., branch decomposition and tree decomposition) approach has been applied with success to provide fixed parameter polynomial time algorithms for difficult combinatorial problems (e.g., \cite{christian2002linear, arnborg1991easy}). In fact, \cite{Sou_ACC2016}, a precursor to this paper, applies the branch decomposition technique to solve the standard dominating set problem related to the design problem in this paper. The main difference between the current paper and \cite{Sou_ACC2016} is that the exact model for the design problem is considered here. In addition, the complexity analysis, the integer programming formulation and the tailored sphere-cut decomposition based optimization algorithm are reported for the first time. The current paper is also similar to two previous work in algorithmic computer science/combinatorial optimization, namely \cite{Marzban2015, Dorn2009}. In particular, reference \cite{Marzban2015}, which is more related to this paper, describes a sphere-cut decomposition/dynamic programming algorithm for the connected dominating set problem. Part of the distinction of this paper is that we consider a relaxed (and more general) version of the connected dominating set problem. As a result, the dynamic programming algorithm needs to be generalized. Moreover, reference \cite{Marzban2015} focuses purely on the theoretical optimization problem, while we formulate the relaxed connected dominating set problem from application.

{\bf Outline:} In Section~\ref{sec:application_problem} the secure system design problem is described. In Section~\ref{sec:formulations_complexity} two equivalent graph optimization  formulations modeling the secure system design problem are presented. The complexity of the problem is discussed, and a mixed integer linear programming formulation is presented. Section~\ref{sec:scd} introduces the two-step fixed parameter polynomial time algorithm for the design problem when the underlying graph is planar. It reviews the first step -- sphere-cut decomposition. Section~\ref{sec:DP} explains the second step -- dynamic programming to solve the design problem. The parameterized complexity is briefly discussed in the end of the section. Section~\ref{sec:case_study} presents a numerical case study on IEEE power network benchmarks. It demonstrates the practical usefulness of the presented solution approaches.

\section{Application motivations} \label{sec:application_problem}
A power network can be modeled as an undirected connected graph where the nodes are buses, and the edges are transmission lines. Following \cite{dan2010stealth}, this paper adopts the DC power flow model \cite{Abur_Exposito_SEbook} as the measurement model for state estimation. In this model, the power system states are the voltage phasors at the buses and the vector of states is denoted by $\theta$. In this paper, the ``full measurement'' assumption is made. That is, as in the setup of \cite{dan2010stealth}, each bus is equipped with a remote terminal unit (RTU) to obtain the following measurements: (net) active power injection at the bus and active power flows on the transmission lines incident to the bus. Let $z$ denote the vector of measurements. Then, the states and measurements are related by $z = H \theta + \Delta z$, where $H$ is the measurement matrix describing how the active power injection and active power flow measurements are linearly related to the voltage phasors (i.e., the states). $\Delta z$ models the imperfection of the measurements. In this paper, $\Delta z$ is assumed to be the vector of data attacks in the measurements.

In power system operations, a ``bad data detection'' (BDD) scheme attempts to detect possible data attack in the measurements (i.e., $\Delta z$). In a typical residual-based BDD scheme, the measurement residual $r$ and the data attack $\Delta z$ are related by $r = (I - H (H^T R^{-1} H)^{-1} H^T R^{-1}) \Delta z := S \Delta z$, where $R$ is a given diagonal positive definite matrix and $S$ is typically referred to as the residual sensitivity matrix. In the BDD scheme, if the residual $r$ is large (in magnitude, for example) the data attack $\Delta z$ is also large. In this case, the operator is notified of possible anomalies in the power system. However, it can be verified that $S H = 0$. This fact was exploited in \cite{LRN09} to introduce a detection-evading data attack of the form $\Delta z = H \tilde{\theta}$, where $\tilde{\theta}$ can be interpreted as a vector of ``fictitious'' voltage phasors, because the residual resulted by $\Delta z = H \tilde{\theta}$ is zero. In view of the interpretation of the measurement matrix $H$ and the full measurement assumption, a data attack $\Delta z$ can evade BDD detection if the following ``attack rules'' are satisfied:

\begin{enumerate}[label=(A\arabic*)]
\item every bus can be associated with some fictitious voltage phasor (collectively forming the vector $\tilde{\theta}$ in above) such that the component of $\Delta z$ affecting active power flow measurement on a transmission line is proportional to the difference of the fictitious voltage phasors at the incident buses.
\item at each bus the component of $\Delta z$ affecting active power injection measurement satisfies Kirchhoff's current law with the components of $\Delta z$ affecting the active power flow measurements on incident transmission lines.
\end{enumerate}
To counter these attacks, \cite{dan2010stealth} considers the scenario in which buses can be protected by installing authentication devices in the corresponding RTUs. The protection rules are as follows: 
\begin{enumerate}[label=(P\arabic*)]
\item if a bus is protected, then none of the measurements related to the bus can be attacked. In other words, the components of data attack $\Delta z$ corresponding to the measurements related to the bus must be zero. The related measurements include the active power injection at the bus and the active power flows on all incident transmission lines.
\item if a bus is not protected, then the active power injection measurement at the bus can be attacked.
\item if a transmission line is not incident to any protected bus, then the active power flow measurement on it can be attacked.
\end{enumerate}
Note that without the ``full measurement'' assumption, protection rules different from (P1)-(P3) need to be considered. The discussion of the ramifications is, however, beyond the scope of this paper. A subset of buses is called a {\bf \emph{perfect protection set}} if when the buses in the set are protected, according to the protection rules (P1) through (P3), there cannot be any detection-avoiding data attack with fictitious voltage phasors satisfying attack rules (A1) and (A2). In \cite{dan2010stealth}, the {\bf \emph{perfect protection problem}} seeks a minimum cardinality perfect protection set. Assuming that the cost associated with the protection is nondecreasing with the number of protected buses, a minimum perfect protection set provides the most economical protection placement strategy. See Fig.~\ref{fig:perfect_protection} for two examples of perfect protection sets of a six-node network.
\begin{figure}[ht]
            \centering
            \parbox{1.2in}{\includegraphics[width=35mm]{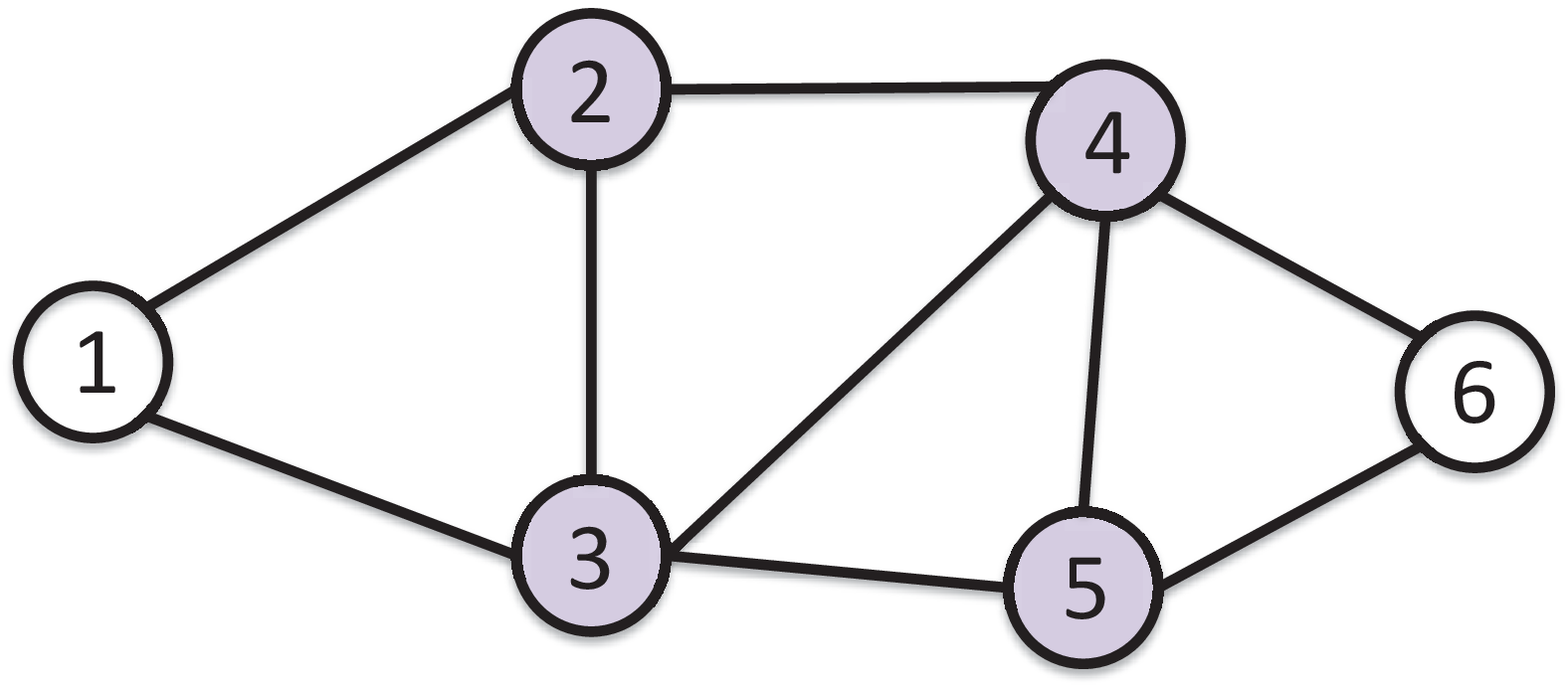}}
            \qquad
            \begin{minipage}{1.2in}
              \includegraphics[width=35mm]{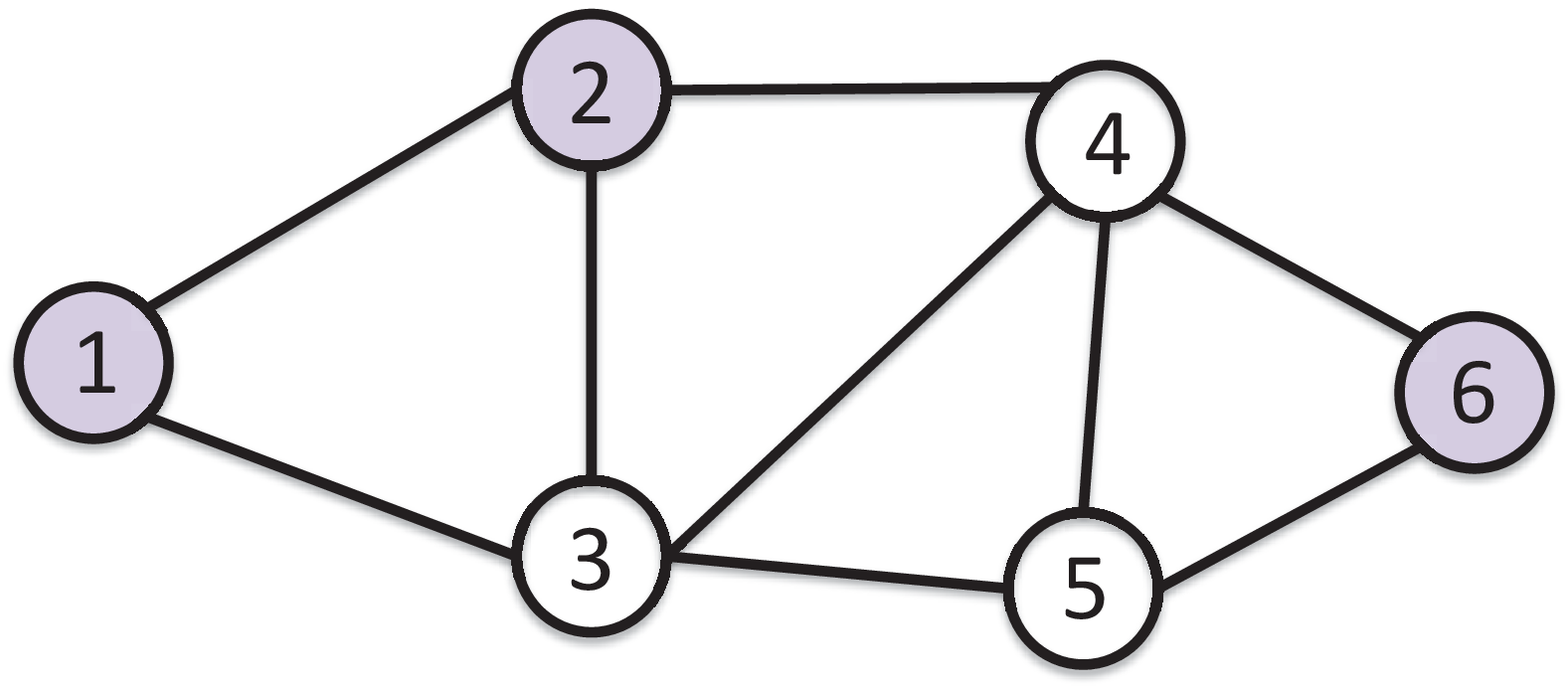}
            \end{minipage}
            \caption{In both figures, the shaded buses are protected and they form perfect protection sets. Left: every transmission line is incident to at least one protected bus. By (P1), the data attack component on every transmission line must be zero. By (A2), the Kirchhoff's current law implies that all injection components of $\Delta z$ must be zero as well. Right: not all components of $\Delta z$ are immediately set to zero by (P1). However, the components of $\Delta z$ are zero on $\{1,2\}$, $\{2,3\}$, $\{2,4\}$,$\{4,6\}$,$\{5,6\}$, forming a spanning tree. By (A1), the fictitious phasors on all buses are the same (in fact zero due to grounding). Hence, nonzero detection-evading data attack is impossible.}
            \label{fig:perfect_protection}
\end{figure}

\section{Problem formulations and complexity} \label{sec:formulations_complexity}

Let $(V,E)$ be a simple undirected graph modeling a power network, with $V$ being the node set and $E \subseteq \{\{u,v\} \mid u \in V, v \in V\}$ being the edge set. In this paper, we use the unordered pair $\{u,v\}$ to denote an (undirected) edge. For any graph $G$, we use the symbols $V(G)$ and $E(G)$ to denote the sets of nodes and edges, respectively. 

A set $D \subseteq V$ is called a {\bf \emph{(graph) perfect protection set}} if the bus set corresponding to $D$ is a perfect protection set for the power network modeled by $(V,E)$. Accordingly, the {\bf \emph{(graph) perfect protection problem}} seeks a minimum cardinality (graph) perfect protection set. For a given graph modeling a power network, it is possible to characterize the perfect protection sets directly in graph, without using the original definition of perfect protection sets involving attack rules (A1) and (A2) and protection rules (P1) through (P3). In the following, two equivalent definitions of perfect protection set are presented. The two definitions lead to two equivalent formulations of the perfect protection problem with different advantages.

\subsection{Integer programming problem formulation}
The first characterization of perfect protection set is as follows:
\begin{proposition} \label{thm:RCDS1}
Let connected graph $(V,E)$ be given. For any $U \subseteq V$, let $I_U(E) := \{\{i,j\} \in E \mid \{i,j\} \cap U \neq \emptyset\}$ denote the subset of $E$ incident to $U$. Then, a set $D \subseteq V$ is a perfect protection set if and only if the subgraph $(V,I_D(E))$ is connected.
\end{proposition}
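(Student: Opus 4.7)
The plan is to prove both implications by explicitly analyzing the attacks $\Delta z$ that are consistent with the attack rules (A1)--(A2) and the protection rules (P1)--(P3), viewing $\Delta z$ as parameterized by a fictitious phasor vector $\tilde{\theta}$ via (A1). The key observation, which I would state first, is that every edge $\{i,j\} \in I_D(E)$ has at least one endpoint in $D$, so by (P1) its flow attack component must vanish; combined with (A1) and the fact that the underlying proportionality constant (a line susceptance) is nonzero, this forces $\tilde{\theta}_i = \tilde{\theta}_j$ for every $\{i,j\} \in I_D(E)$.

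For the forward direction, I would assume $(V, I_D(E))$ is connected and argue directly. Since the above equality $\tilde{\theta}_i = \tilde{\theta}_j$ holds across every edge of $I_D(E)$, connectedness of $(V, I_D(E))$ (which spans all of $V$) propagates it to all pairs in $V$, so $\tilde\theta$ is constant. Applying (A1) again on every edge of $E$ (not only those in $I_D(E)$) gives zero flow attack components, and then (A2) yields zero injection attack components at every bus. Hence $\Delta z = 0$, confirming that $D$ is a perfect protection set.

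For the converse, I would assume $(V, I_D(E))$ is disconnected and construct a nonzero admissible $\Delta z$. Let $C_1,\ldots,C_k$ with $k\ge 2$ be the connected components of $(V, I_D(E))$; these partition $V$. Assign $\tilde{\theta}_v := \ell$ whenever $v \in C_\ell$, and define flow attacks via (A1) and injection attacks via (A2). The verification reduces to checking (P1): for any $v \in D$, every edge incident to $v$ lies in $I_D(E)$ and therefore connects $v$ to another node of the same component, so the flow attack on it is zero, and summing these via (A2) gives a zero injection attack at $v$. Rules (P2) and (P3) impose no constraint beyond what is already satisfied. Nonzeroness of $\Delta z$ follows because $(V,E)$ is connected, so there exists an edge $\{u,v\} \in E$ between two distinct components $C_\ell$ and $C_{\ell'}$; this edge cannot lie in $I_D(E)$, and its flow attack is proportional to $\ell - \ell' \neq 0$.

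I expect the main obstacle to be the converse direction, specifically the bookkeeping that shows the constructed attack is genuinely admissible: one must verify (P1) holds even at protected buses that neighbor multiple components, which hinges on the (tautological but essential) fact that any edge incident to a node in $D$ is in $I_D(E)$ and therefore stays within one component of $(V, I_D(E))$. A secondary subtlety is the necessity of the full-measurement assumption and the connectedness of $(V,E)$: without the latter, one could not guarantee an edge between two components witnessing $\Delta z \ne 0$; without the former, the protection rules (P1)--(P3) would have to be restated and the equivalence with connectedness of $(V, I_D(E))$ would no longer hold in the same clean form.
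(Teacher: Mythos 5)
Your proposal is correct and follows essentially the same route as the paper's proof: the forward direction propagates equality of fictitious phasors along edges of $I_D(E)$ using (P1) and (A1), and the converse builds a nonzero admissible attack from a phasor assignment that is constant on each connected component of $(V,I_D(E))$, using an inter-component edge (guaranteed by connectedness of $(V,E)$ and necessarily having both endpoints unprotected) to witness $\Delta z \neq 0$. The only cosmetic difference is that you label all components with distinct values whereas the paper uses the indicator of a single component; the verification is identical.
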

\begin{proof}
Let $V$ be the set of buses and $E$ be the set of transmission lines for the power network associated with the graph $(V,E)$. If $\{u,v\} \in I_D(E)$, then either bus $u$, $v$ or both is protected. According to protection rule (P1) and transmission line constitutive relation (i.e., attack rule (A1)), the fictitious voltage phasors at $u$ and $v$ are the same. Therefore, if $(V,I_D(E))$ is connected the fictitious voltage phasors at all buses are the same. In this case, non-zero data attack is impossible. Conversely, suppose $(V,I_D(E))$ is not connected. Let $(V_0,E_0)$ be a connected component of $(V,I_D(E))$. Let $\tilde{E} = \{ \{u,v\} \in E \mid u \in V_0, v \in V \setminus V_0\}$. That is, $\tilde{E}$ connects $(V_0,E_0)$ with the rest of the $(V,E)$ if the edges in $\tilde{E}$ were not removed. Note that $\tilde{E} \neq \emptyset$ since $(V,E)$ is assumed to be connected. In addition, $\tilde{E} \cap I_D(E) = \emptyset$, since otherwise $(V_0,E_0)$ would not be a (maximal) connected component in $(V,I_D(E))$. Hence, if $\{u,v\} \in \tilde{E}$ then $u \notin D$ and $v \notin D$, meaning that neither bus $u$ nor bus $v$ is protected. By protection rules (P2) and (P3), the injection measurements at $u$ and $v$, as well as the power flow measurement on transmission line $\{u,v\}$ can be modified by the attack. As a result, a detection avoiding data attack can be constructed as follows: set the fictitious voltage phasors at all buses in $V_0$ to be one and the fictitious voltage phasors at all other buses to be zero. Then, for every $\{u,v\} \in \tilde{E}$ assuming without loss of generality that $u \in V_0$ and $v \notin V_0$, the active power flow attack on transmission line $\{u,v\}$ equals the proportional constant in attack rule (A1), denoted as $H_{uv}$. In addition, there is $H_{uv}$ units of active power injection modification into $u$ and $H_{uv}$ unit of active power extraction modification from $v$. At all edges not in $\tilde{E}$, the active power flow attacks are zero because the fictitious voltage phase angle difference is nonzero if and only if the transmission line is in $\tilde{E}$. Also, attack rule (A2) can be satisfied by setting the active power injection modifications to zero for all buses not incident to $\tilde{E}$. As a result, the desired detection avoiding data attack is constructed when $(V,I_D(E))$ is not connected.
\end{proof}
See Fig.~\ref{fig:perfect_protection} for two examples of perfect protection sets (shaded). In the left, $I_D(E) = E$ and the subgraph $(V,I_D(E))$ is the original graph which is connected. In the right, $I_D(E) = \{ \{1,2\}, \{1,3\}, \{2,3\}, \{2,4\}, \{4,6\}, \{5,6\} \}$. It can be verified that $(V,I_D(E))$ is connected.
\begin{remark}
The connected components of $(V,I_D(E))$ in the statement of Proposition~\ref{thm:RCDS1} correspond to observable islands in the terminology of power network state estimation observability analysis \cite{Abur_Exposito_SEbook}. \hfill $\square$
\end{remark}

A benefit of the perfect protection set characterization in Proposition~\ref{thm:RCDS1} is that it enables the mixed integer linear programming formulation of the perfect protection problem.  For each $i \in V$, we denote the neighborhood $N_i := \{j \mid \{i,j\} \in E\}$. In addition, we designate (arbitrarily) a source node $s \in V$. Then, the perfect protection problem is formulated as:
\begin{equation} \label{opt:RCDS_IP}
\begin{array}{cl}
\underset{x, y}{\text{minimize}} & \sum\limits_{i = 1}^{|V|} \; x_i \vspace{1mm} \\
\text{subject to} & \sum\limits_{j \in N_i} y_{ij} - \sum\limits_{j \in N_i} y_{ji} = -1, \quad \forall i \in V \setminus \{s\} \vspace{2mm} \\
& y_{ij} + y_{ji} \le (|V|-1) (x_i + x_j), \quad \forall \{i,j\} \in E \vspace{2mm} \\
& x_i \in \{0,1\}, \;\; \forall i, \;\; y_{ij} \ge 0, \; y_{ji} \ge 0, \;\; \forall \{i,j\} \in E
\end{array}
\end{equation}
The decision variables are defined such that $i \in V$ is in the perfect protection set $D$ if and only if $x_i = 1$. The decision variables $y_{ij}$ and $y_{ji}$ for each $\{i,j\} \in E$ are auxiliary ``network flow'' variables along the edges in two possible directions, in order to model connectedness of the subgraph $(V,I_D(E))$ in Proposition~\ref{thm:RCDS1}. The first constraint is flow conservation constraint at all nodes except the source $s$. This means that for every $i \in V \setminus \{s\}$ one unit of flow is being shipped from $s$ to $i$. The second constraint, together with the nonnegativity of the flows in the third constraint, specifies that an edge $\{i,j\}$ can be used to ship flows if and only if at least one of its two ends is chosen in the perfect protection set $D$. As a result, the three constraints together model the requirement that $(V,I_D(E))$ is connected.

\subsection{Relaxed connected dominating set problem formulation}

The second (equivalent) characterization of perfect protection set is as follows:
\begin{proposition} \label{thm:RCDS2}
Let graph $(V,E)$ be given. A set $D \subseteq V$ is a perfect protection set if and only if $D$ satisfies both of the following conditions
\begin{enumerate}
\item $D$ is a dominating set of $(V,E)$, meaning that every node in $V$ is either in $D$ or a neighbor of a member of D;
\item For any $i,j \in D$, there exists a sequence $i = i_0, i_1, \ldots, i_p = j$ with $\{i_k, i_{k+1}\} \in E$ for $0 \le k \le (p-1)$. In addition, for all $s,t$ with $0 \le s < t \le p$ such that $i_s, i_t \in D$ but $i_k \notin D$ for all $s < k < t$, it holds that $s = t-2$.
\end{enumerate}
\end{proposition}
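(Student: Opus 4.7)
The plan is to invoke Proposition~\ref{thm:RCDS1} and show that conditions (1) and (2) are jointly equivalent to connectedness of $(V, I_D(E))$. The bridging observation is that a walk $i_0, i_1, \ldots, i_p$ in $(V,E)$ uses only edges of $I_D(E)$ exactly when no two consecutive vertices of the walk both lie outside $D$; this is the structural content of the index condition in (2), which, when read across all consecutive $D$-indices $s < t$ along the walk, forbids stretches of two or more successive non-$D$ vertices between them.

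For the forward direction, assume $(V, I_D(E))$ is connected. Every $v \in V \setminus D$ is non-isolated in $(V, I_D(E))$, and every edge of $I_D(E)$ incident to such a $v$ must have its other endpoint in $D$ by the definition of $I_D(E)$; this gives (1). For (2), given $i, j \in D$, take any path from $i$ to $j$ in $(V, I_D(E))$: since each edge lies in $I_D(E)$, no two consecutive vertices on the path can both be outside $D$, which translates directly into the required index property on consecutive $D$-indexed positions.

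For the reverse direction, suppose (1) and (2) hold, and let $u, w \in V$ be arbitrary. Using (1), connect $u$ to some $u' \in D$ by zero or one edge (taking $u' = u$ if $u \in D$), and similarly $w$ to some $w' \in D$; any such bridging edge automatically lies in $I_D(E)$. By (2), there is a sequence $u' = i_0, \ldots, i_p = w'$ along which no two consecutive vertices both fall outside $D$; consequently each edge $\{i_k, i_{k+1}\}$ has an endpoint in $D$ and hence belongs to $I_D(E)$. Concatenating these pieces yields a walk from $u$ to $w$ in $(V, I_D(E))$, so this subgraph is connected.

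The principal technical point is the careful translation of the index condition in (2) into the structural statement ``no two consecutive vertices lie outside $D$ on the walk,'' applied uniformly to every pair of successive $D$-indices along the chosen walk. Once this equivalence is in hand, both implications reduce to short graph-theoretic bookkeeping, with the degenerate cases ($u = u'$, $i = j$, or a one-edge walk) requiring only routine verification.
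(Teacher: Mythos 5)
Your proposal is correct and follows essentially the same route as the paper: both reduce the claim via Proposition~\ref{thm:RCDS1} to the connectedness of $(V, I_D(E))$ and hinge on the observation that a walk uses only edges of $I_D(E)$ exactly when no two consecutive vertices are both outside $D$, which (for walks whose endpoints lie in $D$) is the content of condition 2). The only real difference is that you prove the converse directly, concatenating a one-edge hop into $D$ with a relaxed path between arbitrary vertices, whereas the paper argues the contrapositive by exhibiting a pair $i,j \in D$ in distinct components of a disconnected $(V, I_D(E))$; both are sound, and you, like the paper's own worked example, correctly read the index condition ``$s = t-2$'' in its intended sense $t - s \le 2$.
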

\begin{proof}
We denote by $C_1$ the condition that the subgraph $(V,I_D(E))$, as defined in Proposition~\ref{thm:RCDS1}, is connected. On the other hand, we denote by $C_2$ conditions 1) and 2) in Proposition~\ref{thm:RCDS2}. To establish the claim of the statement, it suffices to show that $C_1$ and $C_2$ are equivalent. First, suppose $C_1$ holds for $D$. Suppose $D$ is not a dominating set of $(V,E)$. Then, there exists $v \in V$ such that neither $v$ nor any of its neighbors is in $D$. By the definition of $I_D(E)$ in Proposition~\ref{thm:RCDS1}, the vertex $v$ is isolated in $(V,I_D(E))$. This is a contradiction of $C_1$, and hence $D$ satisfying $C_1$ must be a dominating set of $(V,E)$. Further, since by $C_1$ $(V,I_D(E))$ is connected, for every $i,j \in D$ there is a path in $(V,I_D(E))$ between $i$ and $j$. Since every edge in the path is incident to at least one vertex in $D$ (according to the definition of $I_D(E)$), the sequence of vertices traversed by the path satisfy part 2) of condition $C_2$. Therefore, $C_1$ implies $C_2$. Conversely, suppose $D$ does not satisfy $C_1$ (i.e., $(V,I_D(E))$ is not connected). Assume further that $D$ is a dominating set of $(V,E)$ (otherwise part 1) of $C_2$ fails to hold). Let $(V_0,E_0)$ be a connected component of $(V,I_D(E))$ such that $V_0 \cap D \neq \emptyset$ and let $i$ denote a vertex in $V_0 \cap D$. Since $(V,I_D(E))$ is not connected, $V \setminus V_0 \neq \emptyset$. In addition, $D \cap (V \setminus V_0) \neq \emptyset$ since otherwise $D$ is not dominating. Hence, there exists some $j \in D \cap (V \setminus V_0)$. Let $\tilde{E} = \{\{u,v\} \in E \mid u \in V_0, v \in V \setminus V_0\}$. Then, any path between $i$ and $j$ must traverse an edge in $\tilde{E}$. Let $\{s,t\} \in \tilde{E}$ be a traversed edge, then, according to the analysis in the proof of Proposition~\ref{thm:RCDS1}, $s \notin D$ and $t \notin D$. As a result, any sequence $i = i_0, i_1, \ldots, i_p = j$ with $\{i_k, i_{k+1}\} \in E$ for $0 \le k \le (p-1)$ fails to satisfy part 2) of $C_2$. Hence, if $C_1$ does not hold for $D$, either part 1) or part 2) of $C_2$ must fail to hold. Consequently, $C_2$ implies $C_1$, and hence $C_1$ and $C_2$ are equivalent.
\end{proof}
In Fig.~\ref{fig:perfect_protection} both shaded node sets are dominating. In the left, between every two nodes in the shaded set $D = \{2,3,4,5\}$ there is a path traversing nodes only in $D$, satisfying condition 2) in Proposition~\ref{thm:RCDS2}. In the right, the sequence $1,2,4,6$ ``links together'' all three nodes in the shaded set $D = \{1,2,6\}$. The sequence satisfies condition 2) in Proposition~\ref{thm:RCDS2}, since between 2 and 6 there is only one node (i.e., 4) that is not in $D$.
\begin{remark} \label{rmk:RCDS}
A connected dominating set $D_c$ (e.g., \cite{Du:2012:CDS:2412083}) is a dominating set with an additional property that between any two nodes in $D_c$ there exists a path traversing nodes only in $D_c$ (e.g., left example in Fig.~\ref{fig:perfect_protection}). Condition 2) in Proposition~\ref{thm:RCDS2} is a relaxed notion of connectedness. For $D$ satisfying condition 2), between any two nodes in $D$ there exists a ``relaxed path'' such that between two consecutive members of $D$ along the path there can be one node not in $D$ (e.g., the path $1,2,4,6$ in the right of Fig.~\ref{fig:perfect_protection}). This motivates the term {\bf \emph{``relaxed connected dominating set'' (RCDS)}}, for any $D \in V$ satisfying conditions 1) and 2) in Proposition~\ref{thm:RCDS2}. \hfill $\square$
\end{remark}

\begin{remark}
By Remark~\ref{rmk:RCDS} a connected dominating set is a RCDS, which is a dominating set according to condition 1) in Proposition~\ref{thm:RCDS2}. Hence, for a given graph it holds that domination number $\le$ ``relaxed connected domination number'' $\le$ connected domination number. See Fig.~\ref{fig:ring6} for an example.\hfill $\square$
\end{remark}
\begin{figure}[!h]
\begin{center}
\includegraphics[width=30mm]{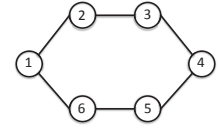}
\caption{In this example graph, a minimum connected dominating set can be $\{1,2,3,4\}$. A minimum RCDS can be $\{1,2,4\}$, while a minimum dominating set can be $\{1,4\}$.}
\label{fig:ring6}
\end{center}
\end{figure}

Proposition~\ref{thm:RCDS2} states that the perfect protection problem can be described as the {\bf \emph{RCDS problem}} seeking a minimum cardinality RCDS. In the sequel, we use the term RCDS problem exclusively to emphasize the graph nature of the problem and its connection to the connected dominating set problem. A proof similar to the one in \cite{NET:NET3230150109} can establish that the RCDS problem is NP-hard even for planar graphs. This proof is obtained by reducing the RCDS problem from the planar vertex cover problem.
\begin{proposition}
The RCDS problem is NP-hard, even if the problem is restricted to instances with planar graphs.
\end{proposition}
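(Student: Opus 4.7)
My plan is to prove NP-hardness by exhibiting a polynomial-time many-one reduction from Planar Vertex Cover, which is NP-hard by Garey, Johnson, and Stockmeyer. Membership of the RCDS problem in NP is immediate from Proposition~\ref{thm:RCDS1}: given a candidate $D$, one verifies in polynomial time that $D$ dominates $V$ and that $(V, I_D(E))$ is connected via standard graph search.

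For the reduction, I will take an instance $(G, k)$ of Planar Vertex Cover with $G = (V, E)$ connected planar (isolated vertices handled separately) and construct a planar graph $G'$ by adjoining to $G$, for each edge $e = \{u, v\} \in E$, a new ``apex'' vertex $w_e$ together with the two triangle edges $\{u, w_e\}$ and $\{v, w_e\}$. Each $w_e$ can be embedded inside a face of $G$ incident to $e$, so $G'$ remains planar and is produced in polynomial time. The claim I will establish is that $G$ admits a vertex cover of size at most $k$ if and only if $G'$ admits an RCDS of size at most $k$.

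For the forward direction, given a vertex cover $C$ of $G$, I will show that $C$ itself is an RCDS of $G'$. Domination is direct, because every apex $w_e$ is incident to a covered endpoint of $e$ and every $v \in V \setminus C$ is adjacent in $G \subseteq G'$ to a covered neighbor. Relaxed connectivity I will verify through Proposition~\ref{thm:RCDS1}: inside each triangle $\{u, v, w_e\}$ the original edge $\{u, v\}$ together with whichever triangle edge is incident to the covered endpoint lies in $I_C(E(G'))$, which attaches $w_e$ to the backbone $V$; the backbone is itself connected through $I_C(E(G'))$ because $G$ is connected and every $G$-edge is covered by $C$. For the backward direction, given an RCDS $D^*$ of size at most $k$, I will canonicalize $D^*$ to lie in $V$ by a local exchange: whenever $w_e \in D^*$, either (a) some endpoint of $e$ is already in $D^*$, in which case I delete $w_e$, or (b) neither endpoint lies in $D^*$, in which case I swap $w_e$ for an endpoint. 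Both operations preserve domination and the connectivity of the incidence subgraph while cardinality is preserved or reduced. After iterating, $D^* \subseteq V$, and since each apex $w_e$ must be dominated by one of its only two neighbors, $D^*$ hits every edge of $G$ and is a vertex cover of size at most $k$.

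The hard part will be verifying preservation of the connectivity of $(V(G'), I_D(E(G')))$ under the exchange in case (b). After replacing $w_e$ by $u$, the only edge leaving the incidence set is $\{v, w_e\}$, and I expect to argue that it is bypassed by the detour $v \to u \to w_e$ whose two edges both enter the new incidence set because $u$ has just been added to $D$. The remaining details---planarity of $G'$, the preliminary reduction to connected $G$ without isolated vertices, and the routine verification that the forward direction covers every original vertex---are standard and should not present difficulties.
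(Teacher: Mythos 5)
Your reduction is correct, but it is a genuinely different construction from the paper's. The paper also reduces from Planar Vertex Cover, but it subdivides every edge of $G$ into a two-edge gadget (the edge-vertices $A$), which destroys the direct adjacencies between cover vertices; to restore connectivity it must then add a vertex for every face of the embedding, wire the faces to incident vertices following the dual graph, and attach a pendant twin $f'$ to each face vertex $f$ to force all of $F$ into any minimum RCDS. The resulting optimum is $|V_C^\star|+|F|$ rather than $|V_C^\star|$. Your apex-triangle gadget keeps the original edges of $G$, and this is exactly what makes the face machinery unnecessary: by the characterization in Proposition~\ref{thm:RCDS1}, a vertex cover $C$ puts \emph{every} edge of $G$ into $I_C(E(G'))$, so the incidence subgraph contains all of $G$ and is connected for free, while each apex $w_e$ merely enforces that $e$ be covered. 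Your local-exchange argument in the backward direction (delete $w_e$ if an endpoint is already chosen, otherwise swap it for an endpoint) does go through: the only incidence edge that can be lost is $\{v,w_e\}$, and it is bypassed by $v$--$u$--$w_e$ since $u$ lies in the new set, so each exchange preserves both domination and connectivity of the incidence subgraph without increasing cardinality. The trade-off is worth noting: your simpler gadget exploits the \emph{relaxed} connectivity specific to RCDS and would fail for the ordinary connected dominating set problem (a vertex cover need not induce a connected subgraph), whereas the paper's heavier face-based construction is the classical template that also handles the unrelaxed problem; conversely, your reduction yields the cleaner identity $\min\text{-RCDS}(G') = \min\text{-VC}(G)$ and a shorter proof. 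The points you defer --- planarity of $G'$ (each $w_e$ embeds in a face incident to $e$, hugging that edge) and restricting to connected instances without isolated vertices --- are indeed standard, and the paper makes the same connectedness assumption.
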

\begin{proof}
The proof is shown by a reduction from the planar vertex cover problem whose proof of NP-hardness can be found in, for example, \cite{garey2002computers}. Let $G = (V,E)$ be a planar graph with an arbitrary given embedding, defining an instance of the planar vertex cover problem. We use symbol $G$ both for the planar graph and the plane graph given by the embedding. We assume that $G$ is connected since the minimum vertex cover of a graph is the union of minimum vertex covers of the connected components. We construct an auxiliary bipartite graph $G' = (V',U',E')$ as follows:
\begin{itemize}
\item Vertex set $V' = V \cup F'$, where $F'$ and the set of all faces of $G$ are one-to-one correspondent.
\item Vertex set $U' = A \cup F$. The sets $A$ and $E$ are one-to-one correspondent. The set $F$ and the set of all faces of $G$ are one-to-one correspondent.
\item Edge set $E' = E'_{ve} \cup E'_{vf} \cup E'_f$. For each $e = \{u,v\} \in E$, there are two edges $\{u,e\}$ and $\{e,v\}$ in $E'_{ve}$. For each $f \in F$, there is an edge $\{v,f\} \in E'_{vf}$ with $v \in V$ if and only if $v$ is incident to the face (of $G$) corresponding to $f$. For each face $f$ of $G$, there is an edge $\{f,f'\} \in E'_f$, with $f$, $f'$ corresponding to the same face.
\end{itemize}
By construction, $G'$ is a connected planar graph. The reduction proof consists of two steps. First, we show that for any $V_C \subseteq V$ being a vertex cover of $G$, the subgraph of $G'$ induced by $V_C \cup F$ is connected. It suffices to show that between any two vertices $f, g \in F$ there is a path traversing vertices entirely in $V_C \cup F$, because all vertices in $V$ has at least one neighbor in $F$. We consider the dual graph $G^*$ of $G$. The relevant properties of $G^*$ are as follows (e.g., \cite{diestel2005graph}): 
\begin{itemize}
\item each vertex in $G^*$ corresponds to a face of $G$ (i.e., a member of $F$ in $G'$),
\item each edge connecting two vertices in $G^*$ corresponds to an edge (i.e., a member of $A$ in $G'$) shared by the boundaries of the two faces in $G$,
\item $G^*$ is connected if and only if $G$ is connected.
\end{itemize}
As a result, for $f,g \in F$ (in $G'$), there exists a sequence $(f = )f_1, e_1, f_2, e_2, \ldots, e_{p-1}, f_p( = g)$ for $f_k \in F$ and $e_k \in A$ that corresponds to a path connecting $f$ and $g$ in $G^*$. Since $V_C$ is a vertex cover of $G$, each $e_k$ is covered by (at least) one vertex in $V_C$ denoted by $v_{e_k} \in V_C$. Consequently, the sequence $(f = )f_1, v_{e_1}, f_2, v_{e_2}, \ldots, v_{e_{p-1}}, f_p( = g)$ is a walk in $G'$ traversing vertices entirely in $V_C \cup F$. This establishes the claim of the first step (of the reduction proof).

For the second step, we show that from every minimum RCDS of $G'$ (minimum RCDS exists because $G'$ is connected) it is possible to construct a minimum vertex cover of $G$. To begin, note that any RCDS must include at least one of $f \in F$ and $f' \in F'$ associated with each face of $G$ because the vertices in $F'$ must be dominated. Since it is always more advantageous to contain $f$ than to contain $f'$, from every minimum RCDS it is always possible to construct a (possibly different) minimum RCDS that includes $F$ but not $F'$. We refer to this as claim (a). In turn, the inclusion of $F$ means that the vertices in $V$ are dominated. To dominate the vertices in $A$, it is possible that vertices in $A$ are chosen in a RCDS. However, let $D$ denote a RCDS such that $F \subset D$ and $A \cap D \neq \emptyset$, we can construct another RCDS, denoted $D'$, such that $A \cap D' = \emptyset$ and $D'$ satisfies conditions 1) and 2) in Proposition~\ref{thm:RCDS2} (to be shown shortly). We construct $D'$ by replacing each $\{u,v\} \in A \cap D$ with either $u \in V$ or $v \in V$. By construction $D'$ satisfies condition 1) in Proposition~\ref{thm:RCDS2}. If $\{u,v\} \in D$ and the subsequence $u, \{u,v\}, v$ is on some ``relaxed path'' whose two ends are vertices not in $A$ (this is to establish condition 2) in Proposition~\ref{thm:RCDS2}), then the subsequence $u, \{u,v\}, v$ can be replaced with $u, f_{uv}, v$, where $f_{uv}$ corresponds to a face with which both $u$ and $v$ are incident. The new subsequence is valid for the relaxed (connected) path because $F \subset D'$ (as $F \subset D$). As a result, $D'$ is also a RCDS and $|D'| \le |D|$ (inequality is strict for example when $u \in D$ and it replaces $\{u,v\} \in A \cap D$). We refer to this as claim (b). As a consequence of claims (a) and (b), it is without loss of generality to search for a minimum RCDS for $G'$ with candidates of the form $U \cup F$, where $U \subseteq V$. Though $U \cup F$ need not be a RCDS for some $U \in V$. However, more specialization on $U$ can be inferred. By definition of RCDS, all vertices in $A$ need to be dominated. Because vertices of $F$ are not neighbors of vertices in $A$. The vertices of $A$ must be dominated by the vertices in $U \subseteq V$, meaning that $U$ must be a vertex cover of $G$. Therefore, if a RCDS is of the form $U \cup F$ with $U \subseteq V$ then $U = V_C$ for some vertex cover of $G$. This is referred to as claim (c). In addition, as shown in the first step of the reduction proof, $V_C$ being a vertex cover of $G$ implies that the subgraph (of $G'$) induced by $V_C \cup F$ is connected. This connectedness fact, combined with claims (a), (b) and (c), leads to the conclusion that the RCDS problem is equivalent to a restricted version in which all solution candidates have the form $V_C \cup F$ for $V_C$ being a vertex cover of $G$. Consequently, the minimum objective value of the RCDS problem on $G'$ is $|V_C^\star| + |F|$, where $|V_C^\star|$ is the cardinality of a minimum vertex cover of $G$. This value of the optimal objective value, together with claims (a), (b) and (c), implies that from any optimal solution to the RCDS problem on planar graph $G'$ we can construct a minimum vertex cover for planar graph $G$. This concludes the reduction proof.
\end{proof}

\section{Branchwidth and sphere-cut decomposition} \label{sec:scd}
If the given graph $G = (V,E)$ is {\bf \emph{planar}}, the RCDS problem can be solved in time linear with problem instance size (i.e., $|E|$) when a graph structure parameter called branchwidth (to be defined shortly) is fixed. The proposed approach, which resembles but generalizes the ones in \cite{Marzban2015, Dorn2009}, consists of two steps. Firstly, an optimal sphere-cut decomposition (to be defined) of $G$ is computed. Secondly, a dynamic programming algorithm, based on the computed sphere-cut decomposition, solves the RCDS problem.

Given a graph $(V,E)$, a {\bf \emph{branch decomposition}} \cite{ROBERTSON1991153} is a pair $(T, \tau)$ where $T$ is a unrooted binary tree with $|E|$ leaf nodes, and $\tau$ is a bijection from the set of leaf nodes of $T$ to $E$. Every non-leaf node of $T$ has degree three. For any $e \in E(T)$, the subgraph $(V(T), E(T) \setminus \{e\})$ has two connected components denoted $T_1(e)$ and $T_2(e)$. Let $L_1$ and $L_2$ denote the leaf nodes of $T_1(e)$ and $T_2(e)$, respectively. Then, we denote $E_1(e) := \tau(L_1) \subseteq E$ and $E_2(e) := \tau(L_2) \subseteq E$. We define $G_1(e)$, $G_2(e)$ to be the subgraphs of $G$ induced by $E_1(e)$ and $E_2(e)$ respectively. That is, $G_1(e) = (\mathop{\cup}_{f \in E_1(e)} f, E_1(e))$ and $G_2(e) = (\mathop{\cup}_{f \in E_2(e)} f, E_2(e))$. For $e \in E(T)$, we define the {\bf \emph{middle set}}, denoted $\omega(e)$, to be $V(G_1(e)) \cap V(G_2(e))$. In other words, $\omega(e) := \{v \in V \; \vline \;  v \in e_1, v \in e_2, \; \text{for some $e_1 \in E_1(e)$ and $e_2 \in E_2(e)$} \}$. The width of branch decomposition $(T, \tau)$ is $\max_{e \in E(T)} | \omega(e) |$. The {\bf \emph{branchwidth}} of  $(V,E)$ is the minimum width over all branch decompositions of $(V,E)$. A branch decomposition of $(V,E)$ is optimal if its width is the branchwidth of the graph. For planar graphs, a {\bf \emph{sphere-cut decomposition}} is a branch decomposition with an additional property: for each $e \in E(T)$, it is possible to draw a closed curve separating the subgraphs $G_1(e)$ and $G_2(e)$ in an arbitrary planar embedding of $G$, such that the curve crosses $G$ only at $\omega(e)$. Traversing the curve (clockwise or counterclockwise) leads to a cyclic order denoted by $\pi_e$. For a planar graph $(V,E)$, an optimal sphere-cut decomposition can be computed in $O(\log_2(|V|) (|V|+|E|)^2)$ time (e.g., \cite{seymour1994call}), as the optimal branch decomposition computed using \cite{seymour1994call} is also an optimal sphere-cut decomposition.

\section{\small Dynamic programming for planar RCDS problem} \label{sec:DP}

\subsection{Notations}

\begin{description}
\item[$T'$:] Let $(T, \tau)$ be an optimal sphere-cut decomposition of graph $G = (V,E)$. Construct $T'$, a rooted tree from $T$ by inserting two new nodes: (a) node $z$ into any edge $\{u,v\} \in E(T)$, and (b) node $r$ (the root node) forming an edge $\{z,r\}$. Specifically, let $\{u,v\} \in E(T)$, then $T' = \big(V(T) \cup \{z,r\}, (E(T) \setminus \{u,v\}) \cup \{\{u,z\}, \{z,v\}, \{z,r\}\}\big)$. 
\item[$G_e$:] For each $e \in E(T')$, a leaf node $u \in V(T')$ is a descendant of $e$ if the (unique) path from root $r$ to $u$ traverses $e$. Let $V_{T'}(e)$ denote the subset of leaf nodes of $T'$ that are descendants of $e$. Then, $G_e$ is the subgraph of $G$ induced by $\tau(V_{T'}(e))$. $G_e$ is either $G_1(e)$ or $G_2(e)$ in the discussion on branch decomposition in Section~\ref{sec:scd}.
\item[$\omega'$:] Recall the definition of middle set $\omega(e) \subseteq V$ for $e \in E(T)$ in Section~\ref{sec:scd}. We define a function $\omega' : E(T') \mapsto 2^V$ as follows: let $\omega'(\{z,r\}) = \emptyset$, $\omega'(\{u,z\}) = \omega'(\{z,v\}) = \omega(\{u,v\})$ and $\omega'(e) = \omega(e)$ for all $e \in E(T') \setminus \{\{u,z\}, \{z,v\}, \{z,r\}\}$. $\omega'(e)$ is also called the middle set, bordering complement graph of $G_e$.
\item[$e_1$, $e_2$:] For each edge $e \in E(T')$ not incident to a leaf node in $T'$, let $v_e \in V(T')$ denote the node incident to $e$ and having $e$ on the path from the root $r$ to $v_e$. There are two other edges incident to $v_e$. Let $e_1$ and $e_2$ denote these two edges. They are the two children of $e$.
\end{description}

For the example graph $G$ in Fig.~\ref{fig:perfect_protection}, the rooted sphere-cut decomposition tree $T'$ is illustrated in Fig.~\ref{fig:BD_tree}.
\begin{figure}[b]
\begin{center}
\includegraphics[width=65mm]{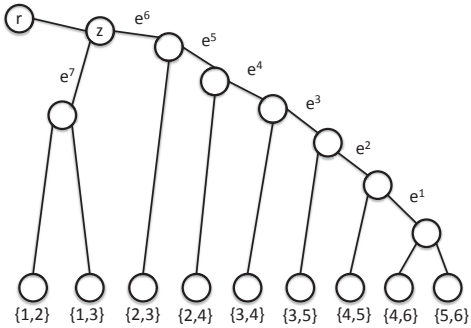}
\caption{The rooted sphere-cut decomposition tree $T'$ of the example graph $G$ in Fig.~\ref{fig:perfect_protection}. Each leaf node of $T'$ corresponds to an edge of $G$.}
\label{fig:BD_tree}
\end{center}
\end{figure}
For $e^3,e^5 \in E(T')$ in Fig.~\ref{fig:BD_tree}, the corresponding subgraphs $G_{e^3} \subseteq G$ and $G_{e^5} \subseteq G$ are shown in Fig.~\ref{fig:GeGf}.
\begin{figure}[h]
            \centering
            \parbox{1.2in}{\includegraphics[width=35mm]{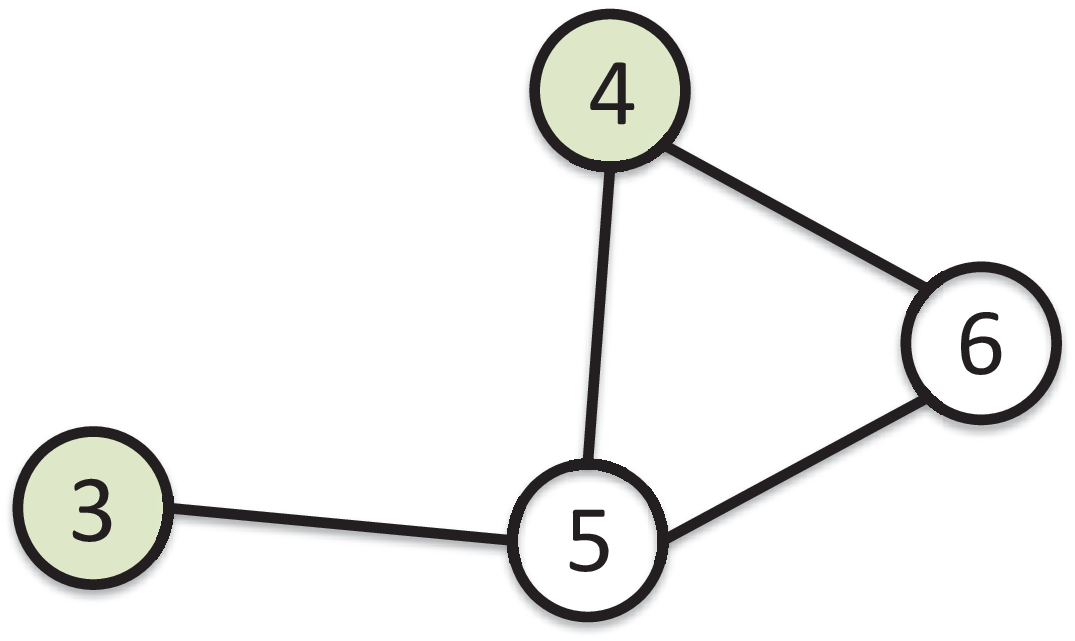}}
            \qquad \qquad
            \begin{minipage}{1.2in}
              \includegraphics[width=35mm]{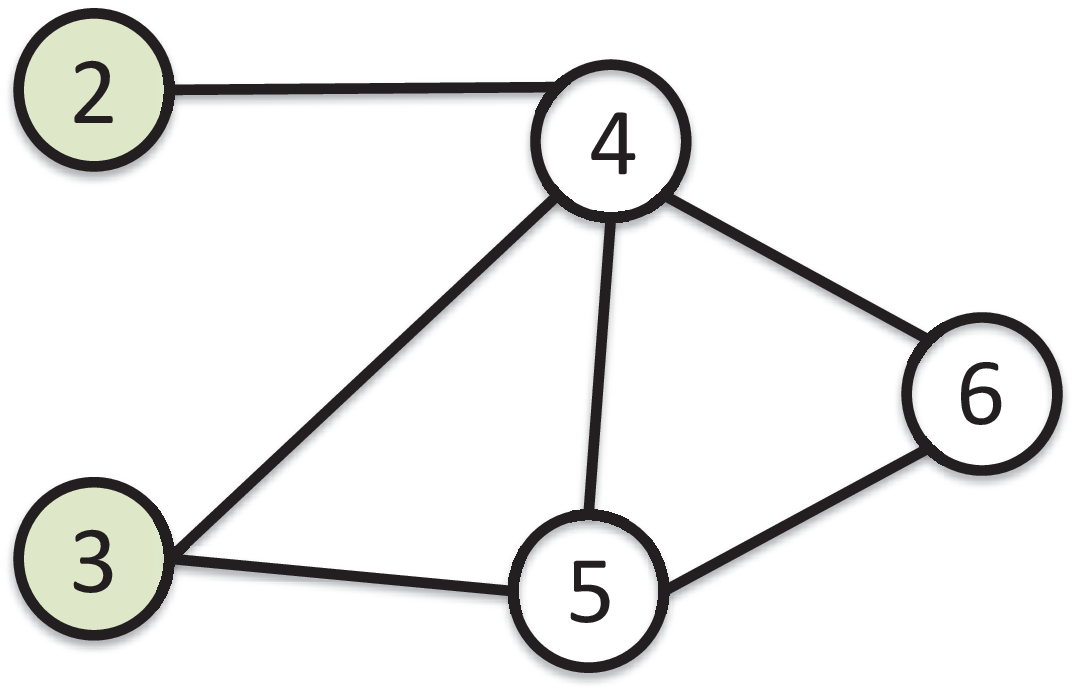}
            \end{minipage}
            \caption{Subgraphs $G_{e^3}$ (left) and $G_{e^5}$ (right) of the graph in Fig.~\ref{fig:perfect_protection}, for $e^3, e^5 \in E(T')$ in Fig.~\ref{fig:BD_tree}. In both subgraphs, the shaded nodes form the middle sets $\omega'(e^3)$ and $\omega'(e^5)$.}
            \label{fig:GeGf}
\end{figure}

\subsection{Dynamic programming procedure} \label{subsec:DP_procedure}

For each $e \in E(T')$, we color the nodes in $\omega'(e)$ in three possible {\bf \emph{basic colors}}: 0, $\hat{0}$, 1. A vector $c_e \in \{0,\hat{0},1\}^{|\omega'(e)|}$ is called a {\bf \emph{basic coloring}} (of $\omega'(e)$). In addition, basic color 0 can be associated with four {\bf \emph{detailed colors}}: $0_[$, $0_]$, $0_*$, $0_s$. Similarly, basic color 1 can be associated with the detailed colors: $1_[$, $1_]$, $1_*$, $1_s$. There is no detailed color associated with basic color $\hat{0}$. Denote $\overline{\mathcal{C}} := \{0_[, 0_], 0_*, 0_s, \hat{0}, 1_[, 1_], 1_*, 1_s\}$. A vector $\bar{c}_e \in \overline{\mathcal{C}}^{|\omega'(e)|}$ is called a {\bf \emph{detailed coloring}} (of $\omega'(e)$). The detailed colorings $\bar{c}_e$ for $e \in E(T')$, supplemented by the basic colorings $c_e$, define the dynamic programming states which parameterize the {\bf \emph{partial problems}} denoted by $P_e(\bar{c}_e)$. A partial problem $P_e(\bar{c}_e)$ seeks a minimum cardinality set $D \subseteq V(G_e)$ satisfying the ``domination constraint'' (details in Appendix~\ref{subsec:DP_consistent}) and the ``relaxed connectedness constraint'' (details in Appendix~\ref{subsec:DP_compatible}). It can be verified (details omitted) that the solution to $P_e(\cdot)$ can be assembled from those of $P_{e_1}(\cdot)$ and $P_{e_2}(\cdot)$ with $e_1$ and $e_2$ being the two children of $e$. In addition, $P_{\{z,r\}}(\cdot)$ is the original RCDS problem. We denote $A_e(\bar{c}_e)$ as the optimal objective value of partial problem $P_e(\bar{c}_e)$, with $A_e(\bar{c}_e) = \infty$ if and only if $P_e(\bar{c}_e)$ is infeasible. $A_e(\cdot)$ is the value function in dynamic programming. For $e \in E(T')$ incident to a leaf node in $T'$, the partial problem $P_e(\cdot)$ involves only one edge and two nodes -- it can be solved by enumeration. Accordingly, for these ``leaf edges'' of $T'$ the value function $A_e(\cdot)$ can be defined with the following rule depending on whether $|\omega'(e)| = 1$ or $|\omega'(e)| = 2$ (no other case is possible). For $|\omega'(e)| = 1$, except for $A_e(0_s) = 1$ and $A_e(1_s) = 1$, all other values of $A_e(\cdot)$ are set to $\infty$. For $|\omega'(e)| = 2$, except for $A_e(0_[,1_]) = 1$, $A_e(\hat{0},\hat{0}) = 0$, $A_e(1_[,0_]) = 1$ and $A_e(1_[,1_]) = 2$, all other values of $A_e(\cdot)$ are set to $\infty$. For each edge in $E(T')$ not incident to the leaf nodes of $T'$, $A_e(\cdot)$ is calculated through the following dynamic programming recursion:
\begin{algorithm}[Dynamic programming recursion]
\label{alg:DP_recursion}
\begin{algorithmic}
\STATE 
\FORALL{basic colorings $c_e \in {\{0,\hat{0},1\}}^{|\omega'(e)|}$}
\FORALL{detailed colorings $\bar{c}_e \in \overline{\mathcal{C}}^{|\omega'(e)|}$ associated with $c_e$}
\STATE{initialize $A_e(\bar{c}_e) = \infty$}
\ENDFOR
\FORALL{basic colorings $(c_{e_1}, c_{e_2})$ consistent with $c_e$}
\FORALL{detailed colorings $(\bar{c}_{e_1}, \bar{c}_{e_2})$ associated with $(c_{e_1}, c_{e_2})$}
\IF{$\bar{c}_{e_1}$ and $\bar{c}_{e_2}$ are compatible, leading to some $\bar{c}_e \in \overline{\mathcal{C}}^{|\omega'(e)|}$}
\STATE If profitable, update $A_e(\bar{c}_e)$ and set $\left(\bar{c}_{e_1}, \bar{c}_{e_2}\right)$ as the pair $\left(\bar{c}_{e_1}^\star(\bar{c}_e),\bar{c}_{e_2}^\star(\bar{c}_e)\right)$ minimizing $A_e(\bar{c}_e)$
\ENDIF
\ENDFOR
\ENDFOR
\ENDFOR
\end{algorithmic}
\end{algorithm}
Certain details of Algorithm~\ref{alg:DP_recursion} require more explanations. Appendix~\ref{subsec:DP_consistent} discusses when basic colorings $(c_{e_1}, c_{e_2})$ are consistent with $c_e$. In addition, Appendix~\ref{subsec:DP_compatible} provides the details about when $\bar{c}_{e_1}$ and $\bar{c}_{e_2}$ are compatible, which $\bar{c}_e$ is resulted and how $A_e(\bar{c}_e)$ is updated. For edge $\{z,r\}$ (i.e., the last iteration), $\omega'(\{z,r\}) = \emptyset$ and hence Algorithm~\ref{alg:DP_recursion} should be interpreted accordingly. In this case, the value function $A_{\{z,r\}}$ is in fact a constant, obtained by the minimization in (\ref{eqn:DP_recursion_zr}) in Appendix~\ref{subsec:DP_compatible_zr}. The details of the last iteration are explained in Appendices~\ref{subsec:DP_consistent_zr} and \ref{subsec:DP_compatible_zr}. 

Let $D^\star$ denote the minimum RCDS to be returned. Then the members of $D^\star$ can be decided by a tree traversal starting from the root: let $e_{r_1}$ and $e_{r_2}$ denote the two children edges of $\{z,r\}$. Let $\bar{c}^*_{e_{r_1}}$, $\bar{c}^*_{e_{r_2}}$ be a minimizing pair leading to the value of $A_{\{z,r\}}$ as returned by Algorithm~\ref{alg:DP_recursion}. Subsequently, for each $e \in E(T')$ such that $\bar{c}^*_e$ is known, we handle two cases:
\begin{itemize}
\item If $e$ is incident to a leaf node of $T'$ then for each $u \in \omega'(e)$, $\bar{c}^*_e(u) \in \{1_[, 1_], 1_*, 1_s\}$ implies $u \in D^\star$ and $u \notin D^\star$ otherwise. If $|\omega'(e)| = 1$ and suppose $u \in \omega'(e)$ and $v \notin \omega'(e)$, then $v \in D^\star$ if and only if $u \notin D^\star$.
\item If $e$ is not incident to a leaf node, then the optimal detailed colorings $\bar{c}^*_{e_1}$ and $\bar{c}^*_{e_2}$ for the two children edges $e_1$ and $e_2$ are set as $\bar{c}_{e_1}^\star(\bar{c}_e^*)$ and $\bar{c}_{e_2}^\star(\bar{c}_e^*)$.
\end{itemize}
The process continues until all edges in $T'$ have been visited.

For the example graph in Fig.~\ref{fig:perfect_protection} and the corresponding rooted sphere-cut decomposition tree $T'$ in Fig.~\ref{fig:BD_tree}, $A_e(\cdot)$ are first determined, in any order, for edges $e$ incident to leaf nodes (i.e., in the bottom). Then, $A_e(\cdot)$ are determined for edges $e = e^1$, \ldots, $e^7$, $\{z,r\}$ one after another. Once $A_e(\cdot)$ is determined for all $e \in E(T')$, the optimal detailed colorings $\bar{c}_e^*$ are determined for edges $e = e^7$, \ldots, $e^1$ one after another. Then the optimal detailed colorings corresponding to the leaf-incident edges are determined and an optimal RCDS $D^\star$ is found.

\subsection{Fixed parameter tractability} \label{subsec:fpt_complexity}
The computation effort of the entire dynamic programming procedure for planar RCDS problem is dominated by the dynamic programming recursion from leaves of $T'$ to root (i.e., Algorithm~\ref{alg:DP_recursion} in Section~\ref{subsec:DP_procedure}). In the innermost for-loop, the computation cost is dominated by the checking of whether $\bar{c}_{e_1}$ and $\bar{c}_{e_2}$ are compatible, which amounts to computing connected components of the bipartite graph detailed in Appendix~\ref{subsec:DP_compatible}. This cost is $O((|\omega'(e_1)| + |(\omega'(e_2)|)^2) = O({\text{BW}}^2)$ time (e.g., \cite{Cormen:2009:IAT:1614191}), where BW is the branchwidth of graph $G$. From each basic coloring $c_{e_x}$ ($x$ being a wildcard for 1 or 2), at most $4^{|\omega'(e_x)|}$ detailed colorings can be derived (e.g., $c_{e_x} = (1,1,\ldots,1)$ leads to $4^{|\omega'(e_x)|}$ detailed colorings). Therefore, the innermost for-loop is executed at most $4^{|\omega'(e_1)| + |\omega'(e_2)|} = O(4^{2 \text{BW}})$ times, for each specific pair of basic colorings $(c_{e_1}, c_{e_2})$. In total, there are no more than $3^{(|\omega'(e)| +  |\omega'(e_1)| + |\omega'(e_2)|)} = O(3^{3\text{BW}})$ pairs of basic colorings $(c_{e_1}, c_{e_2})$ considered in Algorithm~\ref{alg:DP_recursion}. Therefore, Algorithm~\ref{alg:DP_recursion} requires $O({\text{BW}}^2 3^{3 \text{BW}} 4^{2 BW})$ time for each run. Since Algorithm~\ref{alg:DP_recursion} is run for each $e \in E(T')$ between the root and leaves of $T'$ and $|E(T')| = O(|E|)$, the total computation cost for dynamic programming recursion is $O({\text{BW}}^2 3^{3 \text{BW}} 4^{2 BW} |E|)$. This is typical complexity result of a sphere-cut decomposition based algorithm. Computation cost grows very moderately with problem instance size, provided that branchwidth is small. The numerical study in Section~\ref{sec:case_study} indicates that small branchwidth is common in practice.


\section{Numerical case study} \label{sec:case_study}
Graphs from the IEEE power system benchmarks are considered. The minimum RCDS of these graphs are computed by solving integer program (\ref{opt:RCDS_IP}). To evaluate the proposed sphere-cut decomposition/dynamic programming based solution approach (SCD approach for short) for planar graphs, we ``planarize'' the benchmark graphs using the algorithm in \cite{Sou_ACC2016} if necessary. Then we compute the corresponding minimum RCDS using the SCD approach. The cardinality of the minimum RCDS on the planarized graph is an upper bound of the cardinality of the minimum RCDS on the original graph, because removing edges makes it more difficult both to satisfy condition 1) and condition 2) in Proposition~\ref{thm:RCDS2}. As a comparison, the minimum dominating sets of the benchmarks are computed to illustrate the difference between the RCDS problem and the dominating set problem. The computation results are listed in Table~\ref{tab:domination_number}. In the fifth column, the symbol $\text{BW}_p$ denotes the branchwidth of the planarized benchmark graph. In the sixth column, the symbol {$|D^\star_{SCD}|$} denotes the cardinality of the minimum RCDS of the planarized benchmark, as returned by the SCD method. This number is greater than or equal to the cardinality of the minimum RCDS of the original graph, which is denoted by $|D^\star|$ in the seventh column. In the last column, the symbol $|DS|$ denotes the regular domination number. It is smaller than or equal to $|D^\star|$, because a RCDS is a dominating set with an additional property (cf.~Proposition~\ref{thm:RCDS2}). Table~\ref{tab:domination_number} verifies the fact that when a graph is planar, the SCD approach computes the exact minimum RCDS. In cases where planarization is necessary, the SCD approach computes only a suboptimal solution.
\begin{table}[h]
\begin{center}
\caption{branchwidth and RCDS cardinality}
\begin{tabular}{|c|c|c|c|c|c|c|c|}
\hline
Name & \scriptsize{$|V|$} & \scriptsize{$|E|$} & planar & $\text{BW}_p$ & \scriptsize{$|D^\star_{SCD}|$} & \scriptsize{$|D^\star|$} & \scriptsize{$|DS|$}  \\[2pt]
\hline
IEEE 9 & 9 & 9 & yes & 2 & 3 & 3 & 3\\
\hline
IEEE 14 & 14 & 20 & yes & 2 & 4 & 4 & 4\\
\hline
IEEE 24 & 24 & 34 & no & 3 & 8 & 8 & 7 \\
\hline
IEEE 30 & 30 & 41 & yes & 3 & 10 & 10 & 10 \\
\hline
IEEE 39 & 39 & 46 & yes & 3 & 15 & 15 & 13 \\
\hline
IEEE 57 & 57 & 78 & no & 4 & 20 & 19 & 17 \\
\hline
IEEE 118 & 118 & 179 & yes & 4 & 34 & 34 & 32 \\
\hline
IEEE 300 & 300 & 409 & no & 4 & 97 & 93 & 87\\
\hline
\end{tabular}
\label{tab:domination_number}
\end{center}
\end{table}

To compute the minimum RCDS, problem (\ref{opt:RCDS_IP}) is solved using IBM CPLEX. To compute the minimum dominating set, an integer program from \cite{Sou_ACC2016} is solved using IBM CPLEX. To compute the branchwidth and sphere-cut decomposition, we utilize the state-of-the-art implementation by Prof.~Gu's group \cite{Bian2016156}. The dynamic programming computations, as described in Section~\ref{subsec:DP_procedure}, are implemented in MATLAB (for ease of implementation). The computations for branchwidth and branch decomposition are performed on a Linux machine with a 3 GHz CPU and 4GB of RAM. All other computations are performed on a Mac machine with a 2.5 GHz CPU and 8GB of RAM. The computation times for solving the RCDS problem using the SCD approach and integer programming (CPLEX) are shown in Table~\ref{tab:time}. In the table, the column labeled $T_{SCD}$ represents the time to compute the sphere-cut decomposition (with the code from Gu's group). The column labeled $T_{DP}$ represents the time to run dynamic programming as described in Section~\ref{subsec:DP_procedure}. The column labeled $T_{IP}$ represents the time to run CPLEX.
\begin{table}[!h]
\begin{center}
\caption{computation time (sec)}
\begin{tabular}{|c|c|c|c|c|c|c|}
\hline
Name & \scriptsize{$|V|$} & \scriptsize{$|E|$} & $\text{BW}_p$ & \scriptsize{$T_{SCD}$} & \scriptsize{$T_{DP}$} & \scriptsize{$T_{IP}$} \\[2pt]
\hline
IEEE 9 & 9 & 9 & 2 & 0.03 & 0.1498 & 0.0047\\
\hline
IEEE 14 & 14 & 20 & 2 & 0.05 & 0.3487 & 0.0064 \\
\hline
IEEE 24 & 24 & 34 & 3 & 0.13 & 1.2433 & 0.0546 \\
\hline
IEEE 30 & 30 & 41 & 3 & 0.06 & 1.3262 & 0.0096 \\
\hline
IEEE 39 & 39 & 46 & 3 & 0.09 & 1.6348 & 0.0127 \\
\hline
IEEE 57 & 57 & 78 & 4 & 0.12 & 21.4133 & 0.3217 \\
\hline
IEEE 118 & 118 & 179 & 4 & 1 & 19.4139 & 0.1287 \\
\hline
IEEE 300 & 300 & 409 & 4 & 3.4 & 45.3790 & 0.6553 \\
\hline
\end{tabular}
\label{tab:time}
\end{center}
\end{table}
From the table, it can be seen that the computation effort of the SCD approach (computing sphere-cut decomposition and dynamic programming) is affected significantly by the branchwidth of the graph, while the effort grows roughly linearly with the problem instance size (i.e., $|E|$). This is consistent with the computation time analyses in \cite{Bian2016156} and in Section~\ref{subsec:fpt_complexity}. However, while the MATLAB implementation of the dynamic programming has acceptable efficiency (within a minute for all test instances), it is far from efficient in contrast with the sphere-cut decomposition implementation and CPLEX. For dynamic programming, a more advanced implementation using a more appropriate language (e.g., C instead of MATLAB) is highly desirable. CPLEX runtime remains reasonable for all the test instances. In a future study, even larger instances should be tested to determine the limitation of integer programming approach.

\section{Conclusion}
This paper is the second installment (the first being \cite{Sou_ACC2016}) of the author's effort to introduce the machinery of graph decomposition (in particular, branch decomposition) to systematically exploit the inherent structure of power networks in practice, in order to develop solution strategies to solve hard combinatorial optimization problems relevant to applications. These two papers demonstrate the flexibility of graph decomposition/dynamic programming as a methodology to derive customized algorithms for application problems. The computation cost of the proposed methodology grows only linearly with problem instance size, provided that the branchwidth is fixed and small. This is drastically different from the scalability property of the integer programming approach. While the computation studies still indicate considerably better efficiency performance for CPLEX, the graph decomposition approach (with the author's amateur implementation) performs reasonably well in practice. The prospect of better scalability of the graph decomposition approach calls for algorithmic developments and implementations in more professional manners in order to truly demonstrate the power of the approach. Further, more theoretical development is desirable. For instance, removing the planarity assumption can dramatically broaden the scope of applications.

\appendix

\subsection{Consistency of basic colorings, general case} \label{subsec:DP_consistent}
For $e \in E(T')$ and $\bar{c}_e \in \overline{\mathcal{C}}^{|\omega'(e)|}$, the associated basic coloring $c_e$ defines a domination constraint for partial problem $P_e(\bar{c}_e)$. In particular, any solution candidate $D \subseteq V(G_e)$ for $P_e(\bar{c}_e)$ must satisfy the ``{\bf \emph{domination constraint}}'' that for each $u \in \omega'(e)$
\begin{itemize}
\item $c_e(u) = 0 \implies$ $u \notin D$ but it is a neighbor of a member of $D$;
\item $c_e(u) = \hat{0} \implies$ $u \notin D$ and it is not neighboring any member of $D$;
\item $c_e(u) = 1 \implies$ $u \in D$.
\end{itemize}
For $e \in E(T')$ and its children edges $e_1$ and $e_2$, the middle sets $\omega'(e)$, $\omega'(e_1)$ and $\omega'(e_2)$ are related. The following definitions are needed to describe the relationship:
\begin{displaymath}
\begin{array}{l}
X_1 := \omega'(e) \setminus \omega'(e_2), \vspace{0.5mm} \\
X_2 := \omega'(e) \setminus \omega'(e_1), \vspace{0.5mm} \\
X_3 := \omega'(e) \cap \omega'(e_1) \cap \omega'(e_2), \vspace{0.5mm} \\
X_4 := (\omega'(e_1) \cup \omega'(e_2)) \setminus \omega'(e).
\end{array}
\end{displaymath}
See Fig.~\ref{fig:omega_ex} for an illustration of the sets defined above.
\begin{figure}[!h]
\begin{center}
\includegraphics[width=60mm]{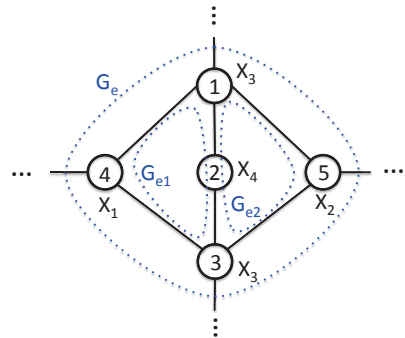}
\caption{In this example, $G_e$ is the graph induced by node set $\{1,2,3,4,5\}$. $G_{e_1}$ is induced by $\{1,2,3,4\}$. $G_{e_2}$ is induced by $\{1,2,3,5\}$. The middle sets are $\omega'(e) = \{1,4,3,5\}$, $\omega'(e_1) = \{1,2,3,4\}$, $\omega'(e_2) = \{1,2,3,5\}$. $X_1 = \{4\}$, $X_2 = \{5\}$, $X_3 = \{1,3\}$, $X_4 = \{2\}$. The node in $X_4$ is in the middle sets of $G_{e_1}$ and $G_{e_2}$, but it becomes an ``internal'' node in $G_e$.}
\label{fig:omega_ex}
\end{center}
\end{figure}
Because of the overlapping of $\omega'(e)$, $\omega'(e_1)$, $\omega'(e_2)$ and the fact that $V(G_e) = V(G_{e_1}) \cup V(G_{e_2})$, the three basic colorings $c_e$, $c_{e_1}$ and $c_{e_2}$ are not always consistent. For a given $c_e$, the pair $(c_{e_1}, c_{e_2})$ are consistent with $c_e$ if there exist RCDS candidates $D_1 \subseteq V(G_{e_1})$ and $D_2 \subseteq V(G_{e_2})$ such that
\begin{itemize}
\item $D_1$ satisfies the domination constraints in $\omega'(e_1)$ characterized by $c_{e_1}$;
\item $D_2$ satisfies the domination constraints in $\omega'(e_2)$ characterized by $c_{e_2}$;
\item For all $u \in X_3 \cup X_4$, either (a): both $u \in D_1$ and $u \in D_2$ or (b): both $u \notin D_1$ and $u \notin D_2$;
\item $D_e := D_1 \cup D_2$ (with $D_e \subseteq V(G_e)$) satisfies the domination constraints in $\omega'(e)$ characterized by $c_e$.
\end{itemize}
Consistency can be characterized directly through the basic colorings $c_e$, $c_{e_1}$ and $c_{e_2}$:
\begin{itemize}
\item For $u \in X_1$, $c_e(u) = c_{e_1}(u)$.
\item For $u \in X_2$, $c_e(u) = c_{e_2}(u)$.
\item For $u \in X_3$, if $c_e(u) \in \{\hat{0},1\}$ then $c_e(u) = c_{e_1}(u) = c_{e_2}(u)$. If $c_e(u) = 0$ then one of the following three holds: (a) $c_{e_1}(u) = 0, c_{e_2}(u) = \hat{0}$ or (b) $c_{e_1}(u) = \hat{0}, c_{e_2}(u) = 0$, or (c) $c_{e_1}(u) = c_{e_2}(u) = 0$.
\item For $u \in X_4$, exactly one of the four cases must hold: (a) $c_{e_1}(u) = c_{e_2}(u) = 1$, (b) $c_{e_1}(u) = 0, c_{e_2}(u) = \hat{0}$, (c) $c_{e_1}(u) = \hat{0}, c_{e_2}(u) = 0$ or (d) $c_{e_1}(u) = c_{e_2}(u) = 0$.
\end{itemize}

\subsection{Consistency of basic colorings, last iteration} \label{subsec:DP_consistent_zr}
For the last iteration of Algorithm~\ref{alg:DP_recursion} concerning the edge $\{z,r\}$, $\omega'(\{z,r\}) = \emptyset$. Hence, $c_e$ is in principle not defined. Accordingly, the first two ``for-loops'' in Algorithm~\ref{alg:DP_recursion} are ignored. Further, the third ``for-loop'' should be understood as $c_{e_1}$ and $c_{e_2}$ being consistent with each other. The consistency in this special case means that for $u \in X_4 = \omega'(e_1) \cup \omega'(e_2)$ exactly one of the four cases must hold: (a) $c_{e_1}(u) = c_{e_2}(u) = 1$, (b) $c_{e_1}(u) = 0, c_{e_2}(u) = \hat{0}$, (c) $c_{e_1}(u) = \hat{0}, c_{e_2}(u) = 0$ or (d) $c_{e_1}(u) = c_{e_2}(u) = 0$. This is a simplified version of the characterization in Appendix~\ref{subsec:DP_consistent} because only $X_4$ is nonempty.

\subsection{Compatibility of detailed colorings and update of $A_e(\cdot)$} \label{subsec:DP_compatible}

Recall from Proposition~\ref{thm:RCDS1} that $D \subseteq V(G_e)$ defines a subgraph $(D, I_D(E(G_e))$. For $e \in E(T')$, a detailed coloring $\bar{c}_e \in \overline{\mathcal{C}}^{|\omega'(e)|}$ defines a ``{\bf \emph{relaxed connectedness constraint}}'' for partial problem $P_e(\bar{c}_e)$. In particular, every solution candidate $D \subseteq V(G_e)$ for $P_e(\bar{c}_e)$ must satisfy (with $x$ being a wildcard for 0 or 1), for all $u \in \omega'(e)$
\begin{itemize}
\item $\bar{c}_e(u) = x_[ \implies c_e(u) = x$. In addition, according to the cyclic order $\pi_e$, $u$ is the first node in the intersection between $\omega'(e)$ and the node set of a connected component of $(D, I_D(E(G_e)))$. In addition, the intersection contains at least two nodes;

\item $\bar{c}_e(u) = x_]$ is similar to the case $\bar{c}_e(u) = x_[$, except that $u$ is the last node in the corresponding intersection;

\item $\bar{c}_e(u) = x_*$ is similar to the case $\bar{c}_e(u) = x_[$, except that $u$ is neither the first nor the last node in the corresponding intersection;

\item $\bar{c}_e(u) = x_s$ is similar to the case $\bar{c}_e(u) = x_[$, except that the corresponding intersection contains only one node, namely $u$.
\end{itemize}
Further, for $\bar{c}_e(u) = \hat{0}$ the domination constraint imposed by basic color $\hat{0}$ must be satisfied by $D$. In this case, the node $u$ colored $\hat{0}$ is not part of any connected components. It can be verified that, because of planarity, a detailed coloring $\bar{c}_e \in \overline{\mathcal{C}}^{|\omega'(e)|}$ specifies completely the information regarding the number of connected components of $(D, I_D(E(G_e)))$ that contain at least one node in $\omega'(e)$, as well as the information regarding the nodes in $\omega'(e)$ that each connected component contains. For $e \in E(T')$ and $e_1$, $e_2$ being the children edges, let $\bar{c}_{e_1}$ and $\bar{c}_{e_2}$ be two detailed colorings such that the associated basic colorings $c_{e_1}$ and $c_{e_2}$ are consistent with some $c_e \in \{0,\hat{0},1\}^{|\omega'(e)|}$. Then, the pair $(\bar{c}_{e_1}, \bar{c}_{e_2})$ are compatible if there exist RCDS candidates $D_1 \subseteq V(G_{e_1})$ and $D_2 \subseteq V(G_{e_2})$ such that
\begin{itemize}
\item $D_1$ satisfies the ``relaxed connectedness'' constraints characterized by $\bar{c}_{e_1}$;
\item $D_2$ satisfies the ``relaxed connectedness'' constraints characterized by $\bar{c}_{e_2}$;
\item For $D_e := D_1 \cup D_2$, every connected component of $(D_e, I_{D_e}(E(G_e)))$ intersecting $X_1 \cup X_2 \cup X_3 \cup X_4$ must intersect $X_1 \cup X_2 \cup X_3 = \omega'(e)$.
\end{itemize}
Note that, with an inductive argument, it can be shown that the third bullet above implies that every connected component of $(D_e, I_{D_e}(E(G_e)))$ intersects $X_1 \cup X_2 \cup X_3 = \omega'(e)$. The following calculations can check whether or not the third bullet above is satisfied: $\bar{c}_{e_1}$ specifies the number of connected components of $(D_1, I_{D_1}(E(G_{e_1})))$ intersecting $\omega'(e_1)$. Let $C_{e_1}^1$, \ldots, $C_{e_1}^p$ denote these connected components. The vector $\bar{c}_{e_1}$ also specifies the nodes in $\omega'(e_1)$ that are contained in each $C_{e_1}^k$ for $k = 1,\ldots,p$. Similarly, $\bar{c}_{e_2}$ specifies the analogous information with the corresponding connected components denoted by $C_{e_2}^1$, \ldots, $C_{e_2}^q$. Construct a bipartite graph with node set $U_1 \cup U_2$. $U_1 = \{u_1,\ldots,u_p\}$ and $U_2 = \{v_1,\ldots,v_q\}$. For $1 \le s \le p$, $1 \le t \le q$, there is an edge $\{u_s, v_t\}$ in the bipartite graph if and only if $C_{e_1}^s$ and $C_{e_2}^t$ share at least one node in $X_3 \cup X_4$. Then, the connected components of the bipartite graph correspond to the connected components of $(D_e, I_{D_e}(E(G_e)))$ that intersect $X_1 \cup X_2 \cup X_3 \cup X_4$. In addition, for each of these connected components (of $(D_e, I_{D_e}(E(G_e)))$) its intersection with $X_1 \cup X_2 \cup X_3 \cup X_4$ can be inferred from the bipartite graph. Thus, the condition in the third bullet can be checked.

If the pair of detailed colorings $(\bar{c}_{e_1}, \bar{c}_{e_2})$ are compatible, there exists a unique detailed coloring $\bar{c}_e \in {\overline{\mathcal{C}}}^{|\omega'(e)|}$ such that all $D_e$ generated by the ``compatibility check'' above (i.e., the three bullets), together with the connected components of its corresponding $(D_e, I_{D_e}(E(G_e)))$, satisfy the ``relaxed connectedness'' constraint defined by $\bar{c}_e$. We call $\bar{c}_{e_1}$ and $\bar{c}_{e_2}$ lead to $\bar{c}_e$. In this case, the optimal solutions of partial problems $P_{e_1}(\bar{c}_{e_1})$ and $P_{e_2}(\bar{c}_{e_2})$ can be combined to form a feasible solution of partial problem $P_e(\bar{c}_e)$. This may lead to an update of the ``currently best'' feasible solution of $P_e(\bar{c}_e)$. Correspondingly, the value function is updated as follows:
\begin{equation} \label{eqn:Jmin_general}
\begin{array}{ccl}
A_e(\bar{c}_e) & \leftarrow & \min \Big\{A_e(\bar{c}_e), \;\; A_{e_1} (\bar{c}_{e_1}) + A_{e_2} (\bar{c}_{e_2}) - \#_1(X_3, \bar{c}_{e_1})  \vspace{0.5mm} \\  & & - \#_1(X_4, \bar{c}_{e_1}) \Big\},
\end{array}
\end{equation}
where $\#_1(X_3, \bar{c}_{e_1})$ denotes the number of nodes in $X_3$ with basic color 1 by coloring $\bar{c}_{e_1}$ (note that $\#_1(X_3, \bar{c}_{e_1}) = \#_1(X_3, \bar{c}_{e_2})$). Similarly, $\#_1(X_4, \bar{c}_{e_1})$ denotes the number of nodes in $X_4$ with basic color 1 by coloring $\bar{c}_{e_1}$ (and also by $\bar{c}_{e_2}$). By definition of $\omega'$, there is no node in exactly one of $\omega'(e)$, $\omega'(e_1)$ and $\omega'(e_2)$. Hence, $X_3$ and $X_4$, which are disjoint, partition the set $\omega'(e_1) \cap \omega'(e_2) = V(G_{e_1}) \cap V(G_{e_2})$ (cf.~Fig.~\ref{fig:omega_ex}). Therefore, in (\ref{eqn:Jmin_general}) the two negative terms prevent double-counting of RCDS members in $\omega'(e_1) \cap \omega'(e_2)$. For the case where $\bar{c}_{e_1}$ and $\bar{c}_{e_2}$ lead to an update of $A_e(\bar{c}_e)$, the minimizing pair is updated as $\bar{c}_{e_1}^\star(\bar{c}_e) = \bar{c}_{e_1}$ and $\bar{c}_{e_2}^\star(\bar{c}_e) = \bar{c}_{e_2}$.

\subsection{Compatibility of detailed colorings, last iteration} \label{subsec:DP_compatible_zr}
For the last iteration of Algorithm~\ref{alg:DP_recursion} concerning the edge $\{z,r\}$, $\omega'(\{z,r\}) = \emptyset$. Hence, no detailed coloring $\bar{c}_{\{z,r\}}$ is defined. Accordingly, the ``if-statement'' in Algorithm~\ref{alg:DP_recursion} should be interpreted as $\bar{c}_{e_1}$ and $\bar{c}_{e_2}$ being compatible. This means that there exist RCDS candidates $D_1 \subseteq V(G_{e_1})$ and $D_2 \subseteq V(G_{e_2})$ such that
\begin{itemize}
\item $D_1$ satisfies the ``relaxed connectedness'' constraints characterized by $\bar{c}_{e_1}$, as described in Appendix~\ref{subsec:DP_compatible};
\item $D_2$ satisfies the ``relaxed connectedness'' constraints characterized by $\bar{c}_{e_2}$, as described in Appendix~\ref{subsec:DP_compatible};
\item For $D := D_1 \cup D_2$, the subgraph $(D, I_{D}(E(G)))$ is connected.
\end{itemize}
This is a simplified and modified version of the characterization in Appendix~\ref{subsec:DP_compatible}. Since $X_3 = \emptyset$, the corresponding value function update is
\begin{equation} \label{eqn:DP_recursion_zr}
A_{\{z,r\}} \leftarrow \min \Big\{A_{\{z,r\}}, \;\; A_{e_1} (\bar{c}_{e_1}) + A_{e_2} (\bar{c}_{e_2}) - \#_1(X_4, \bar{c}_{e_1}) \Big\}.
\end{equation}
Notice that $\#_1(X_4, \bar{c}_{e_1}) = \#_1(X_4, \bar{c}_{e_2})$ because of the consistency of $c_{e_1}$ and $c_{e_2}$.

\bibliographystyle{IEEEtran}
\bibliography{rcds}

\end{document}